\documentclass[conference]{IEEEtran}

\IEEEoverridecommandlockouts
\usepackage{cite}
\usepackage{amsmath,amssymb,amsfonts,amsthm}
\usepackage{graphicx}
\usepackage{textcomp}
\usepackage{xcolor,soul,framed} 
\usepackage{mdwtab}
\usepackage{eqparbox}
\usepackage{array}
\usepackage[caption=false,font=normalsize,labelfont=sf,textfont=sf]{subfig}
\usepackage{textcomp}
\usepackage{stfloats}
\usepackage{url}
\usepackage{verbatim}
\usepackage{cite}
\usepackage{multirow}
\usepackage{indentfirst}
\usepackage{booktabs}
\usepackage{breqn}
\usepackage{bm}
\usepackage{makecell}
\usepackage[ruled]{algorithm2e}
\usepackage{url}
\usepackage{hyperref}
\usepackage{xcolor}
\usepackage[normalem]{ulem}

\def\BibTeX{{\rm B\kern-.05em{\sc i\kern-.025em b}\kern-.08em
T\kern-.1667em\lower.7ex\hbox{E}\kern-.125emX}}
\usepackage[ruled]{algorithm2e}
\usepackage{algpseudocode}
\newtheorem{theorem}{\textbf{Theorem}}

\newtheorem{proposition}[theorem]{\textbf{Proposition}}

\usepackage{url}
\usepackage{hyperref} 
\usepackage{xcolor}

\hypersetup{
    colorlinks=true, 
    linkcolor=blue, 
    filecolor=black, 
    urlcolor=blue,
    citecolor=blue, 
    linkbordercolor={1 1 1}, 
    pdfborder={0 0 0} 
}
\graphicspath{ {./plots/} }
\begin{document}

 \title{Nonparametric Kernel Regression for Coordinated Energy Storage Peak Shaving with Stacked Services}
\author{\IEEEauthorblockN{Emily Logan, Ning Qi, Bolun Xu} 
\IEEEauthorblockA{\textit{Department of Earth and Environmental Engineering} \\
\textit{Columbia University} \\
\textit{New York, NY 10027, USA}\\
\{el3350, nq2176, bx2177\}@columbia.edu}
\vspace{-0.5cm}
\thanks{This work was partly supported by the Department of Energy under Grant No. DE-EE0011385 and by the National Science Foundation under award ECCS-2239046. }
}

\maketitle

\begin{abstract}

Developing effective control strategies for behind-the-meter energy storage to coordinate peak shaving and stacked services is essential for reducing electricity costs and extending battery lifetime in commercial buildings. This work proposes an end-to-end, two-stage framework for coordinating peak shaving and energy arbitrage with a theoretical decomposition guarantee. In the first stage, a non-parametric kernel regression model constructs state-of-charge trajectory bounds from historical data that satisfy peak-shaving requirements. The second stage utilizes the remaining capacity for energy arbitrage via a transfer learning method. Case studies using New York City commercial building demand data show that our method achieves a 1.3 times improvement in performance over the state-of-the-art forecast-based method, achieving cost savings and effective peak management without relying on predictions.
\end{abstract}
\begin{IEEEkeywords}
Peak shaving, energy storage, stacked service, two-stage optimization, kernel regression
\end{IEEEkeywords}

\section{Introduction}

The rapid growth of electricity demand, along with limited grid infrastructure, has intensified the need for effective demand-side management. Large commercial and industrial consumers are facing significant electricity costs, including both real-time electricity charges and monthly peak demand charges, with the latter being the more substantial. Behind-the-meter battery energy storage systems (BESS) have thus become an attractive solution for peak shaving and electricity cost reduction, while also mitigating grid stress~\cite{li2023dynamic,Mirletz2023}.

Peak shaving is challenging, as it requires decision-making under uncertainty over monthly horizons. Many existing methods rely on demand and price forecasting and uncertainty models such as regression~\cite{deSalis2014energy}, neural-networks~\cite{zhang2023novel}, and chance-constrained models~\cite{qi2023chance}. However, reliable forecasting or uncertainty models at a monthly resolution remain computationally intensive~\cite{kaut2024handling, cortes2025economic, mo2021optimal}. The complexity increases significantly when peak shaving is integrated with other grid services, such as energy arbitrage and frequency regulation~\cite{shi2017using}, requiring careful coordination to balance the competing objectives. Other approaches, including dynamic programming~\cite{engels2020optimal} and portfolio optimization~\cite{qi2023portfolio}, have also been employed to optimize stacked storage services, but these methods primarily focus on daily peak shaving and struggle with computational burden when adapting to real-time uncertainties or extending to monthly horizons. This limits their practicality for smaller businesses and non-profits, where costs may exceed the potential savings.

We propose an end-to-end, two-stage framework to coordinate peak shaving with stacked services such as energy arbitrage. Unlike conventional methods that rely on forecasting or computationally intensive optimization, we propose a lightweight online optimization method to jointly predict a dynamic peak demand target and state-of-charge (SoC) reserve trajectory for peak shaving. This enables real-time adaptive control under evolving demand conditions, resulting in faster and more efficient battery operation. The key contributions of this work are summarized as follows:
\begin{enumerate}
    \item \textbf{Decoupled Control Formulation:} We show that the stacked service control problem can be reformulated as a two-stage optimization by determining the SoC reserve required for peak shaving first, and then the remaining battery capacity is subsequently allocated to other services.
    \item \textbf{Kernel Regression-Based Prediction:} We develop a data-driven kernel regression algorithm that jointly predicts peak-shaving targets and SoC reserve requirements directly from historical demand and battery characteristics.
    \item \textbf{Joint Peak Shaving and Arbitrage:} We integrate the kernel regression predictions with an arbitrage optimization algorithm, enabling decoupled management of peak shaving and energy arbitrage, reducing total electricity costs and mitigating unnecessary battery cycling.
    \item \textbf{Validating Performance through Simulation:} The proposed framework is evaluated using 6 years of electricity demand data from a 35-story commercial building in New York City, demonstrating improved peak shaving and cost savings compared to a state-of-the-art method.
\end{enumerate}

The remainder of the paper is organized as follows. Section~\ref{formulation} introduces the formulation, and Section~\ref{numerical_methods} details the training and the real-time control algorithm. Section~\ref{simulation} shows the simulation and Section~\ref{conclusion} concludes the paper.

\section{Problem Formulation and Decomposition}\label{formulation}

\subsection{Peak Shaving and Arbitrage Formulation}



We consider a commercial building with a co-located, behind-the-meter BESS operating under a time-varying electricity tariff. The objective is to minimize the overall electricity cost by leveraging both energy arbitrage and peak shaving strategies. 
The deterministic version of the joint peak shaving and arbitrage problem is listed as follows:


\begin{subequations}\label{PS_and_Arb_Formulation}
\begin{align}
F_{\text{comb}}(x) &= \min_{p,d,q} \sum\nolimits_{t=1}^T 
\left[ c d_t + \lambda_t (D_t - d_t + q_t) \right] + \kappa p
\label{det_obj} \\
\text{s.t. } \quad
& p \ge D_t - d_t + q_t, \quad \forall t = 1, \dots, T
\label{peak_def} \\
& e_{t+1} = e_{t} - {d_t}/{\eta} + \eta q_t, \quad \forall t = 1, \dots, T
\label{soc_update} \\
& 0 \le d_t, q_t \le \overline{P}, \quad \forall t = 1, \dots, T
\label{power_bounds} \\
& \underline{E} \le e_{t+1} \le \overline{E}, \quad \forall t = 1, \dots, T
\label{energy_bounds}
\end{align}
\end{subequations}

The decision variables include the discharge energy per timestep ($d_t$), charge energy per timestep ($q_t$), and peak demand within the optimization horizon ($p$). The battery’s maximum energy capacity and energy capacity per timestep are denoted by $\overline{E}$ and $\overline{P}$, respectively. $\underline{E}$ denotes the minimum SoC bound and $\eta$ denotes the efficiency for charging and discharging. The objective function in \eqref{det_obj} minimizes total cost by balancing peak demand charges ($\kappa$), real-time energy price ($\lambda$), and battery degradation cost ($c$). The model includes constraints for battery behavior and peak shaving. \eqref{peak_def} defines $p$ as the maximum net demand. SoC dynamics are updated by \eqref{soc_update}, and \eqref{power_bounds} and \eqref{energy_bounds} enforce power and energy limits.

\subsection{Decomposition of Stacked Services}
The deterministic formulation of the problem in \eqref{PS_and_Arb_Formulation} is impractical to solve in practice, as both demand and electricity prices are uncertain, while the peak demand charge is typically settled on a monthly basis. Solving the problem over such a long uncertainty horizon is therefore computationally intractable. To address this challenge, based on the fact that the peak penalty $\kappa$ is much larger than the energy cost coefficients $c$ and $\lambda_t$ in practice, \eqref{PS_and_Arb_Formulation} can be effectively decomposed into two sequential subproblems. This decomposition first prioritizes minimizing peak demand and then performs energy arbitrage over the resulting feasible set.


\textbf{Peak Shaving.} 
In the first stage, we minimize the peak demand and determine the associated trajectory of the SoC reserve necessary for peak shaving. In comparison to the objective in \eqref{PS_and_Arb_Formulation}, we introduce a small tie-breaking term $\delta \sum_{t=1}^T e_{t+1}$ in the objective. This term keeps SoC levels low but does not affect the peak value when $\delta$ is sufficiently small. Solving \eqref{PS_Only_Formulation} thus yields an optimal peak demand $p^*$ and SoC profile $e^*$. 

\begin{equation}\label{PS_Only_Formulation}
\begin{aligned}
& F_{\text{PS}}(x;\delta) = \min_{p,d,q,e} \; p + \delta \sum\nolimits_{t=1}^T e_{t+1} \\
\text{s.t. } \quad
& \text{Constraints \eqref{peak_def}--\eqref{energy_bounds}}
\end{aligned}
\end{equation}


\textbf{Arbitrage.} 
Given the Stage 1 solution ($p^*$, $e^*$), Stage 2 maximizes the revenue from energy arbitrage while ensuring that arbitrage decisions respect both the peak shaving constraint $p \le p^*$ and the SoC trajectory established in Stage 1.
\begin{subequations}\label{Arb_Formulation_Stage2}
\begin{align}
& F_{\text{arb}}(x) = \min_{d,q,e} \sum\nolimits_{t=1}^T \big[ c d_t - \lambda_t (d_t - q_t) \big]
\label{arb_obj_min} \\
\text{s.t. } \quad
& p^* \ge D_t - d_t + q_t, 
\quad \forall t = 1,\dots,T
\label{peak_const_stage2} \\
& e_{t+1} \ge e_{t+1}^*, 
\quad \forall t = 1,\dots,T
\label{PS_SOC_min_stage2} \\
&  \eqref{soc_update}-\eqref{energy_bounds}. \nonumber
\end{align}
\end{subequations}






We now present the following proposition proving that the decomposition is equivalent to the original problem:

\begin{proposition}\label{Equivalence_Prop}
    Given that $\delta$ is sufficiently small and $\kappa \gg \lambda_t, c$, solving the two-stage optimization in \eqref{PS_Only_Formulation} and \eqref{Arb_Formulation_Stage2} is equivalent to solving the combined peak shaving and arbitrage problem in \eqref{PS_and_Arb_Formulation}.
\end{proposition}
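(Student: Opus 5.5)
The plan is to treat the combined problem \eqref{PS_and_Arb_Formulation} as a lexicographic optimization: first minimize the peak $p$, then minimize the energy and degradation cost. I would then show that Stage~1 computes exactly the first level and Stage~2 exactly the second. Let $p_{\min}$ denote the smallest $p$ for which constraints \eqref{peak_def}--\eqref{energy_bounds} are feasible.

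\textbf{Step 1: the combined optimum has peak $p_{\min}$.} Define
\[
G(p)=\min\Big\{\textstyle\sum_t \big[c d_t+\lambda_t(D_t-d_t+q_t)\big] : (d,q,e) \text{ feasible for } \eqref{peak_def}\text{--}\eqref{energy_bounds} \text{ with this } p\Big\}.
\]
Then $F_{\text{comb}}=\min_{p\ge p_{\min}} G(p)+\kappa p$. Since $G$ is the value function of a bounded LP parametrized by a right-hand side, it is convex and piecewise linear on $[p_{\min},\infty)$. Its slopes are bounded in absolute value by some $L$ that depends only on $\lambda,c,\eta,\overline P,T$. A crude bound such as $L\le T(\max_t\lambda_t+c)$ suffices, obtained from the dual multipliers of \eqref{peak_def}. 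Hence whenever $\kappa>L$, which is the meaning I give to ``$\kappa\gg\lambda_t,c$'', the function $G(p)+\kappa p$ is strictly increasing and the combined optimum is attained at $p=p_{\min}$. Stage~1 with $\delta$ small also returns $p^*=p_{\min}$. This follows by the same sensitivity argument: the tie-breaking term $\delta\sum_t e_{t+1}$ changes by at most $\delta T(\overline E-\underline E)$ per unit of peak slack, so $\delta$ below a threshold cannot trade peak for SoC.

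\textbf{Step 2: at $p=p^*$, the Stage~1 SoC profile is a pointwise lower bound.} Fix $p=p^*$. For each $t$, the set of achievable increments
\[
\Big\{-\tfrac{d_t}{\eta}+\eta q_t : 0\le d_t,q_t\le\overline P,\ d_t-q_t\ge D_t-p^*\Big\}
\]
is the linear image of a convex polygon, hence an interval $[a_t,b_t]$. The feasible SoC trajectories are therefore exactly the sequences with $e_1$ fixed, $a_t\le e_{t+1}-e_t\le b_t$ and $\underline E\le e_{t+1}\le\overline E$. This family is closed under pointwise minimum. If $e,e'$ are feasible, then $\min(e_{t+1},e'_{t+1})-\min(e_t,e'_t)$ stays in $[a_t,b_t]$; this follows by a case check on which trajectory attains each minimum. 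Consequently a pointwise smallest feasible trajectory $\underline e$ exists. Because $\delta>0$, minimizing $\sum_t e_{t+1}$ over this set returns precisely $e^*=\underline e$, and this minimizer is unique. Thus every trajectory feasible for the combined problem at peak $p^*$ satisfies $e_{t+1}\ge e^*_{t+1}$.

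\textbf{Step 3: conclude.} By Steps~1--2, the feasible set of the combined problem restricted to its optimal peak $p^*$ coincides with the feasible set of \eqref{Arb_Formulation_Stage2}: constraint \eqref{PS_SOC_min_stage2} is redundant there, and \eqref{peak_const_stage2} is \eqref{peak_def} at $p=p^*$. On this set the combined objective equals
\[
\sum_t\lambda_t D_t+\kappa p^*+\sum_t\big[c d_t-\lambda_t(d_t-q_t)\big],
\]
which is the Stage~2 objective plus a constant. Hence the two problems have the same minimizers, and $F_{\text{comb}}=F_{\text{arb}}+\sum_t\lambda_tD_t+\kappa p^*$.

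I expect the main obstacle to be Step~2, namely justifying that adding \eqref{PS_SOC_min_stage2} does not cut off combined-optimal solutions. The lattice (pointwise-min) closure argument is the key, and it requires care in verifying that the per-step increment set is a genuine interval independent of $e_t$. Making ``$\kappa\gg\lambda_t,c$'' quantitative in Step~1 is the secondary difficulty, and I would handle it via LP duality.
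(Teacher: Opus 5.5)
Your proof is correct, and it takes a genuinely different and more rigorous route than the paper.

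The paper works through the KKT system. It rescales the multipliers by $\kappa$ and sends $\kappa\to\infty$ so that $\nabla_x F_{\text{arb}}/\kappa$ drops out. It then identifies the limit with the KKT conditions of \eqref{PS_Only_Formulation}, restricts the combined problem to the peak-optimal set $S^*$, and discards the constant $\kappa F^*_{\text{PS}}$. That argument never explains why $S^*$ coincides with the Stage~2 feasible set $\{p\le p^*,\ e_{t+1}\ge e^*_{t+1}\}$. Instead it asserts $e^*_{\text{comb}}=e^*$, which is false in general: arbitrage can keep the SoC strictly above $e^*$ after the peak.

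Your approach fills both points. Step~1 replaces the limit by an explicit finite threshold, using the polyhedral value function $G$. Step~2's lattice argument supplies exactly the missing link. Because the per-step increment sets $[a_t,b_t]$ do not depend on $e_t$, the feasible SoC trajectories at $p^*$ are closed under pointwise minima. So $e^*$ is their pointwise floor, and \eqref{PS_SOC_min_stage2} cuts nothing off. The paper's argument is shorter and formally generic; yours relies on the storage structure but actually establishes the equivalence.

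Two quantitative claims in Step~1 need repair. Neither breaks the logic, since only finiteness of the thresholds is needed.

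First, the bound $L\le T(\max_t\lambda_t+c)$ is false in general. Take $T=2$, $e_1=\underline E=0$, $\overline E,\overline P$ large, $D=(0,M)$, $\lambda=(1,0)$ and $c=0$. Then $G(p)=(M-p)/\eta^2$ on $[M/(1+\eta^2),M]$. With $\eta=1/2$ and $\kappa=3$, the combined optimum is $p=M$ rather than $p_{\min}=0.8M$. State instead that the right derivative of $G$ at $p_{\min}$ equals $-\sum_t\mu_t$ at some vertex of the dual polyhedron. The entries of that vertex are linear in $(\lambda,c)$, with coefficients depending only on $\eta$ and $T$. Use $|\lambda_t|$ throughout, since real-time prices can be negative.

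Second, $\delta T(\overline E-\underline E)$ bounds the total range of the tie-breaking term, not its rate of change in $p$. On its own it only gives $p^*\le p_{\min}+\delta T(\overline E-\underline E)$. Exactness needs a finite right slope at $p_{\min}$ of $V(p)=\min\{\sum_t e_{t+1}: \text{feasible at } p\}$. This follows from the same polyhedral argument, together with the condition $\delta\,|V'_+(p_{\min})|<1$.
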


\begin{proof}
We first restate the combined problem for convenience:
\begin{equation}
    F_{\text{comb}}(x)=\sum\nolimits_{t=1}^T \lambda_t D_t + F_{\text{arb}}(x) + \kappa F_{\text{PS}}(x). \nonumber
\end{equation}
Let $x=(p,\{d_t,q_t,e_{t+1}\}_{t=1}^T)$ denote the decision variables, and let $S$ be the feasible region defined by \eqref{peak_def}–\eqref{energy_bounds}, where $g(x)\le0$ collects all inequality constraints and $h(x)=0$ the equality constraint.  

The Lagrangian of the combined problem is
\begin{equation}
    \mathcal{L}_{\text{comb}}(x,\mu,\nu)=F_{\text{arb}}(x)+\kappa F_{\text{PS}}(x)+\mu^T g(x)+\nu^T h(x),
\end{equation}
with $\mu\ge0$ and $\nu$ denoting the dual variables. The KKT stationarity condition is
\begin{equation}\label{eq:kkt_combined}
    \nabla_x F_{\text{arb}}(x)+\kappa\nabla_x F_{\text{PS}}(x)+\nabla g(x)^T\mu+\nabla h(x)^T\nu=0,
\end{equation}
accompanied by primal and dual feasibility and complementary slackness.  

To analyze the limit $\kappa \gg \lambda_t, c$, define rescaled dual variables $\tilde{\mu}=\mu/\kappa$ and $\tilde{\nu}=\nu/\kappa$. Substituting into \eqref{eq:kkt_combined} and dividing by $\kappa$ yields
\begin{equation}\label{eq:kkt_rescaled}
   \nabla_x F_{\text{arb}}(x)/\kappa+\nabla_x F_{\text{PS}}(x)+\nabla g(x)^T\tilde{\mu}+\nabla h(x)^T\tilde{\nu}=0.
\end{equation}
Taking the limit as $\kappa\to\infty$ removes the first term, giving
\begin{equation}\label{eq:kkt_ps_stage1}
    \nabla_x F_{\text{PS}}(x)+\nabla g(x)^T\tilde{\mu}+\nabla h(x)^T\tilde{\nu}=0,
\end{equation}
which are precisely the KKT conditions of the peak shaving problem. Thus, for $\kappa\gg\lambda_t, c$ and $\delta\to0^+$, the solution to \eqref{PS_and_Arb_Formulation} coincides with the peak shaving optimum: $p_{\text{comb}}^*=p^*$ and $e_{\text{comb}}^*=e^*$.  

Since the combined solution minimizes $F_{\text{PS}}(x)$ under these conditions, it lies in the optimal peak shaving set $S^*=\{x\in S\mid F_{\text{PS}}(x)=F_{\text{PS}}^*\}$. Restricting the combined problem to $S^*$, we have:
\[
    \min_{x\in S^*}\big[F_{\text{arb}}(x)+\kappa F_{\text{PS}}(x)\big].
\]
As $F_{\text{PS}}(x)=F_{\text{PS}}^*$ for all $x\in S^*$, the constant term $\kappa F_{\text{PS}}^*$ can be dropped, leaving $\min_{x\in S^*}F_{\text{arb}}(x)$, identical to the arbitrage formulation \eqref{Arb_Formulation_Stage2} over the feasible set characterized by $p\le p^*$ and $e_{t+1}\ge e_{t+1}^*$ for all $t$.  

Therefore, when $\kappa$ is sufficiently large, solving the combined problem \eqref{PS_and_Arb_Formulation} is equivalent to the two-stage procedure of first solving the peak shaving problem \eqref{PS_Only_Formulation} to obtain $(p^*,e^*)$, and then optimizing arbitrage over $S^*$.  \end{proof}

In this work, our primary contribution is the development and evaluation of a peak shaving strategy, including its integration with other stacked services. Energy arbitrage is handled using an established non-anticipatory policy from \cite{baker2023transferable}.

\section{Learning and Control Method}\label{numerical_methods}
We now present our main methods for deriving a non-anticipatory algorithm to solve the joint peak shaving and arbitrage problem using the decomposition approach. Our general approach is as follows: 1) we use historical demand data to generate the SoC reserve and daily net demand targets derived from monthly peak shaving results; 2) we use a kernel regression algorithm with a look-back demand window to jointly predict the SoC reserve target and the peak shaving target; 3) we input the predicted SoC reserve target into a real-time peak shaving and arbitrage controller.





\subsection{Kernel Regression for SoC Reserve Prediction}\label{kernel_regression}

The kernel regression model predicts the minimum SoC reserve required for peak shaving at each timestep by comparing the current demand pattern to historically observed demand trajectories. We first use historical demand data and solve \eqref{PS_Only_Formulation} to generate the hindsight-optimal SoC reserve target $e^{\text{hist}}_s$ and the net demand for each timestep. From the latter, we extract the maximum value within each day, which we define as the hindsight-optimal peak shaving target $p^{\text{hist}}_s$. Note that we use $s$ to represent the time index in the training data and $t$ in the real-time control to reduce confusion of the time index. 

\textbf{Training data generation.} Let the full historical demand record be \(D^{\text{hist}} = \{D^{\text{hist}}_1,\dots,D^{\text{hist}}_N\}\), where \(N\) is the total number of historical timesteps. Let $T$ be the length of the sliding demand windows. Demand look-back vectors are constructed as $\mathbf{D}^{\text{hist}}_s = [D^{\text{hist}}_{s-T+1}, \dots, D^{\text{hist}}_s]$, where each window endpoint $s$ is associated with a corresponding next minimum SoC reserve $e^{\text{hist}}_{s+1}$ and peak shaving target $p^{\text{hist}}_{s+1}$ obtained from the prior training results. Hence each training data entry can be represented as
\begin{align}
    \{e^{\text{hist}}_{s+1}\, , \ p^{\text{hist}}_{s+1} \ | \ \mathbf{X}^{\text{hist}}_s = [\, \mathbf{D}^{\text{hist}}_s,\, t_{\sin,s},\, t_{\cos,s} \,] \}
\end{align}
where $t_{\sin,t}$ and $t_{\cos,t}$ are sine and cosine vectors with a daily period to capture the time-of-the-day feature.

\textbf{Prediction Feature.} In prediction, we assemble the input feature similarly to $\mathbf{X}^{\text{hist}}_s$. Let the observed demand sequence up to the current timestep \(t\) be \(D = \{D_1,\dots,D_t\}\), and at each timestep $t \ge T$, the most recent sequence of $T$ demand values is represented as $\mathbf{D}_t = [D_{t-T+1}, \dots, D_t]$. The prediction feature vector is thus assembled as
\begin{equation}
\mathbf{X}_t = [\, \mathbf{D}_t,\, t_{\sin,t},\, t_{\cos,t} \,],
\end{equation}

\textbf{$\alpha$-Confidence Kernel Regression.} For each current feature vector $\mathbf{X}_t$, the set of $K$ most similar historical vectors, denoted $\mathrm{KNN}(t)$, is identified using Euclidean distance.  
The similarity between $\mathbf{X}_t$ and each neighbor $\mathbf{X}^{\text{hist}}_s$ is quantified using a Gaussian kernel, and the resulting weights are normalized across the $K$ nearest neighbors so that they sum to one:

\begin{equation}\label{kernel_weights}
w_{t,s} = 
\frac{\exp \!\left( -\| \mathbf{X}_t - \mathbf{X}^{\text{hist}}_s \|_2^2 / (2T\sigma^2) \right)}
{\sum_{j \in \mathrm{KNN}(t)} \exp \!\left( -\| \mathbf{X}_t - \mathbf{X}^{\text{hist}}_j \|_2^2 / (2T\sigma^2) \right)}.
\end{equation}

\noindent where $s \in \mathrm{KNN}(t)$ and $\sigma$ is the kernel bandwidth parameter that controls how rapidly the influence of historical windows decays with dissimilarity to the current feature profile.

The resulting weights $w_{t,s}$ are used to compute the predicted required SoC reserve at each timestep. We introduce a confidence-level parameter $\alpha \in (0,1)$ to provide a tunable conservativeness ensuring the predicted SoC reserve is greater than $\alpha$ ratio of the training data. The predicted SoC for the next time step $t+1$ is therefore
\begin{subequations}
\begin{align}
    \hat{e}^{\alpha}_{t+1} &= e^{\text{hist}}_{k} + \frac{\alpha-W_{t,k}}{W_{t,k}-W_{t,k-1}}(e^{\text{hist}}_{k}-e^{\text{hist}}_{k-1}) 
\end{align}
where
\begin{align}
    & W_{t,k-1} \leq \alpha \leq W_{t,k} \\
    &  W_{t,k} = \textstyle \sum_{i=1}^k w_{t,k} \\
    & \text{k is the ascending rank of $e^{\text{hist}}_{s+1}$}, s \in \mathrm{KNN}(t)
\end{align}
\end{subequations}

The historical SoC reserve values of the $K$ nearest neighbors are sorted in ascending order, and their corresponding weights are summed. The SoC value at which the cumulative weight first reaches $\alpha$ is selected as the predicted SoC reserve, allowing the estimate to be adjusted to be more conservative or aggressive depending on operational preferences.

Unlike the SoC reserve, the peak shaving target $p_{t+1}^{pred}$ is computed as a simple weighted average:

\begin{equation}
p_{t+1}^{\text{pred}} = \sum_{s \in KNN(t)} w_{t,s}\, p^{\text{hist}}_s, \quad s \in KNN(t).
\end{equation}

No confidence-level adjustment is applied here because the real-time controller described in Section \ref{sec:rtc} dynamically adapts peak-shaving targets based on the observed net demand, ensuring conservativeness is enforced through the adaptive control rather than through the kernel regression itself.




\subsection{Real-Time Control}\label{sec:rtc}
\begin{algorithm}[htbp]\label{alg:rt_control}
\caption{Real-Time Peak Shaving and Stacked Services Control}
\SetAlgoLined
\SetAlgoNoEnd
\KwIn{$D_t$, $e_t$, $\hat{e}^{\,\alpha}_{t+1}$, $p_{t-1}$, $p^{pred}_{t+1}$, $(\overline{P}, \overline{E}, \eta)$, $q_{t,arb}$, $d_{t,arb}$ \cite{baker2023transferable}.}
\KwOut{$q_t$, $d_t$, $e_{t+1}$, $p_{t+1}$, $D^{net}_t$.}

\textbf{Stage 1: Peak Shaving}\\
$\Delta e_t = \hat{e}^{\,\alpha}_{t+1} - e_t$; $\quad p_t = \max\{p_{t-1}, p^{pred}_{t+1}\}$\\
\If{$D_t > p_t$}{
  $d_{t,\text{PS}} = \min(\max(-\Delta e_t \eta, D_t - p_t), \overline{P}, e_t \eta)$; 
  
  $q_{t,\text{PS}} = 0$
}
\ElseIf{$e_t < \hat{e}^{\,\alpha}_{t+1}$ \text{ and } $D_t < p_t$}{
  $q_{t,\text{PS}} = \min(\Delta e_t / \eta, p_t - D_t, \overline{P}, (\overline{E}-e_t)/\eta)$; 
  
  $d_{t,\text{PS}} = 0$
}
\Else{$q_{t,\text{PS}} = d_{t,\text{PS}} = 0$}
$e_{t+1,\text{PS}} = e_t - d_{t,\text{PS}}/\eta + q_{t,\text{PS}}\eta$; 
$p_{t+1} = \max\{p_t, D^{net}_t\}$\\

\textbf{Stage 2: Arbitrage}\\
$q_{t,\max} = \min(p_{t+1}-D_t, \overline{P}, (\overline{E}-e_t)/\eta)$; 

\noindent $d_{t,\max} = \min((e_t - e_{t+1,\text{PS}})\eta, \overline{P})$\\
\If{$q_{t,\text{PS}} > 0$}{
  $q_t = \min(q_{t,\text{arb}} + q_{t,\text{PS}}, q_{t,\max})$; $d_t = 0$
}
\ElseIf{$d_{t,\text{PS}} > 0$}{
  $d_t = \min(d_{t,\text{arb}} + d_{t,\text{PS}}, d_{t,\max})$; $q_t = 0$
}
\Else{
  $q_t = \min(q_{t,\text{arb}}, q_{t,\max})$; $d_t = \min(d_{t,\text{arb}}, d_{t,\max})$
}
$e_{t+1} = e_t - d_t/\eta + q_t \eta$; $\quad D^{net}_t = D_t - d_t + q_t$
\end{algorithm}

Based on the predicted SoC reserve estimates and daily net demand targets, a real-time controller determines the battery charge schedule, discharge schedule, and SoC trajectory to achieve peak shaving while ensuring feasible operation. This SoC trajectory establishes the energy that must be reserved for peak shaving at each time step. The remaining battery capacity can then be allocated to additional services, such as arbitrage. The control logic operates as described by Algorithm \ref{alg:rt_control}.

\subsection{Parameter Search}\label{parameter_search}
The kernel regression model relies on three key hyperparameters—the kernel bandwidth $\sigma$, the look-back window $T$, and the number of nearest neighbors $K$, which directly influence the performance of the combined peak shaving and arbitrage strategy. Performance is evaluated using two metrics: (1) \textit{total cost savings}, defined as the reduction in electricity cost relative to a no-storage baseline, and (2) \textit{annual battery cycles}, calculated as the total discharge energy divided by the nominal battery capacity $\overline{E}$, indicating utilization and wear. 
To systematically identify effective hyperparameters, we implement an automated, tiered search that adaptively tunes each in sequence. The look-back window $T$ is optimized first through a coarse-to-fine search to find the most informative time horizon. Next, the kernel bandwidth $\sigma$ is tuned using a logarithmic-scale exploration followed by linear refinement to strike a balance between accuracy and smoothness. Finally, the number of nearest neighbors $K$ is adjusted through broad and refined searches to maximize performance.

\section{Simulation and Results}\label{simulation}

We conduct a real-world case study of a 35-story, $800,000~\text{ft}^2$, Class A commercial building in New York City. The demand data for this building, provided by Nantum AI\footnote{ \url{https://github.com/emlog9/peak_shaving_kernel_regression}}, is shown in Fig. \ref{demand_data}. Peak demand charges follow Con Edison’s standard rate, which applies a charge of \$42.80/kW to the monthly peak demand from June~1 to September~30 and \$33.50/kW during other months, plus a flat monthly customer charge of \$71. The monthly peak demand is defined as the average of the two highest consecutive 15-minute intervals of demand\footnote{\url{https://www.coned.com/en/accounts-billing/your-bill/time-of-use}}. Arbitrage is performed using real-time electricity prices from the New York Independent System Operator\footnote{\url{https://www.nyiso.com/energy-market-operational-data}}. To account for seasonal patterns in demand and pricing, the kernel regression model considers only historical data from the same season (either June–September or the other months) when predicting SoC reserve and net demand targets. 

\begin{figure}[ht]
    \centering
\includegraphics[width=0.95\columnwidth]{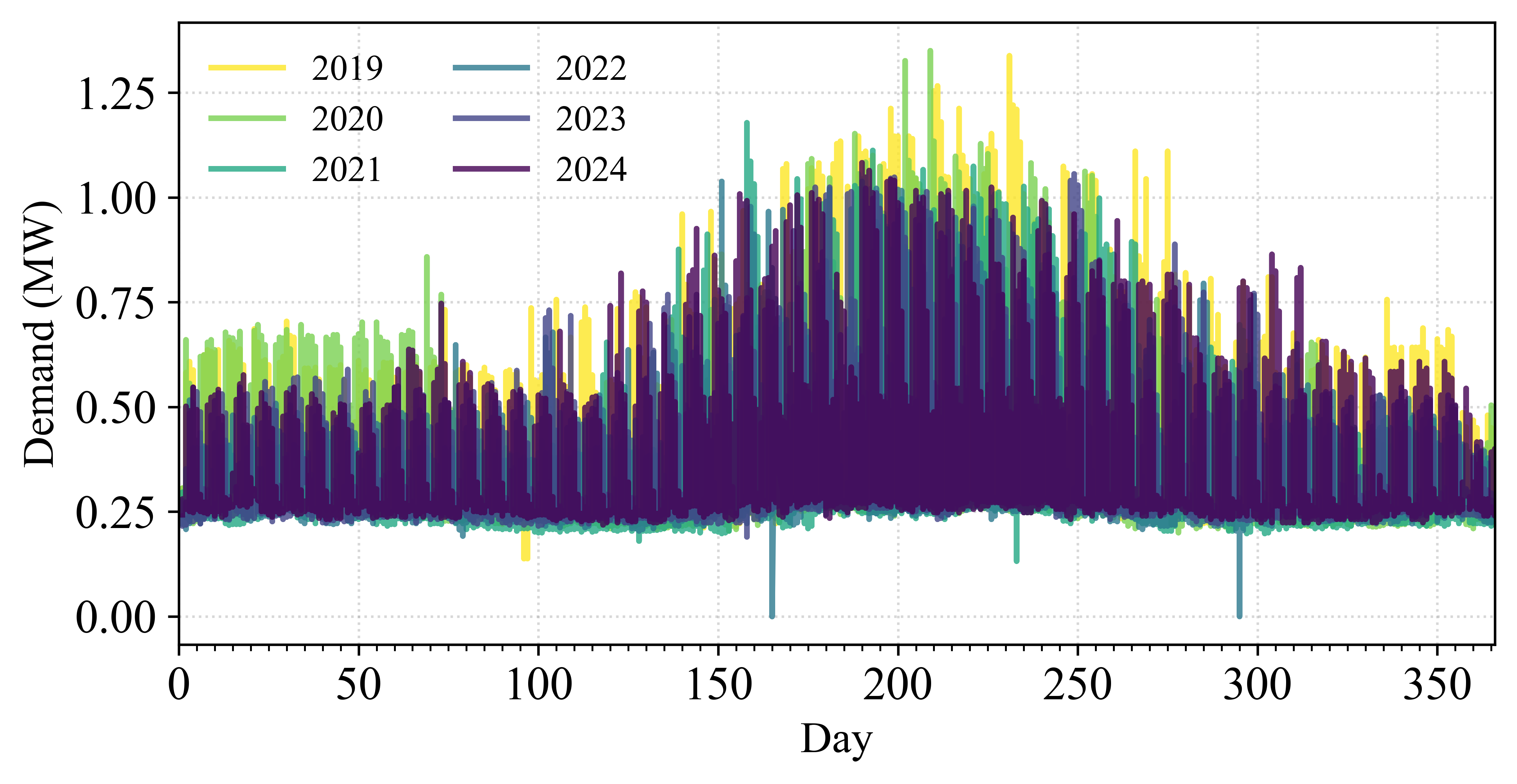}
    \caption{Building electricity demand from 2019–2024. Line colors indicate years. Yearly standard deviations are: 2019: 0.21 MW, 2020: 0.16 MW, 2021: 0.16 MW, 2022: 0.16 MW, 2023: 0.18 MW, and 2024: 0.19 MW.}
    \label{demand_data}\vspace{-0.5cm}
\end{figure}


Historical building data from 2019–2023 is used for model training, and the data from 2024 is reserved for testing. The battery duration ($\overline{E}/\overline{P}$) is 2.2 hours, with an initial state of charge (SoC) of 50\% ($e_0$), a minimum SoC ($\underline{E}$) of 20\% of $\overline{E}$, and an efficiency ($\eta$) of 90\%. Parameters are selected by the automatic algorithm described in Section~\ref{parameter_search}. All simulations are conducted at a 5-minute resolution on a 10-core Apple M4 processor, and all optimization problems are solved using Gurobi~12.0.2. Model performance is evaluated based on total cost savings compared to the no-storage scenario and battery cycling. The results for peak shaving and arbitrage are compared against two benchmarks: (i) the deterministic case outlined in \eqref{PS_and_Arb_Formulation}, limited to an average of one cycle per day, and (ii) a demand-forecasting controller based on a recursive XGBoost model with a \textbf{mean absolute error (MAE) of 0.03 MW}. This model utilizes calendar features, temperature, and lagged demand to generate day-ahead demand forecasts, which are then input into the peak shaving optimization from \eqref{PS_Only_Formulation}. From there, the real-time controller determines the charge and discharge schedule. As with the kernel regression framework, these results are subsequently combined with an arbitrage schedule from \cite{baker2023transferable} in a two-stage approach. Simulations are performed for battery power ratings ranging from 0.1~MW to 1.0~MW.


\begin{figure}[ht]
    \centering
\includegraphics[width=0.95\columnwidth]{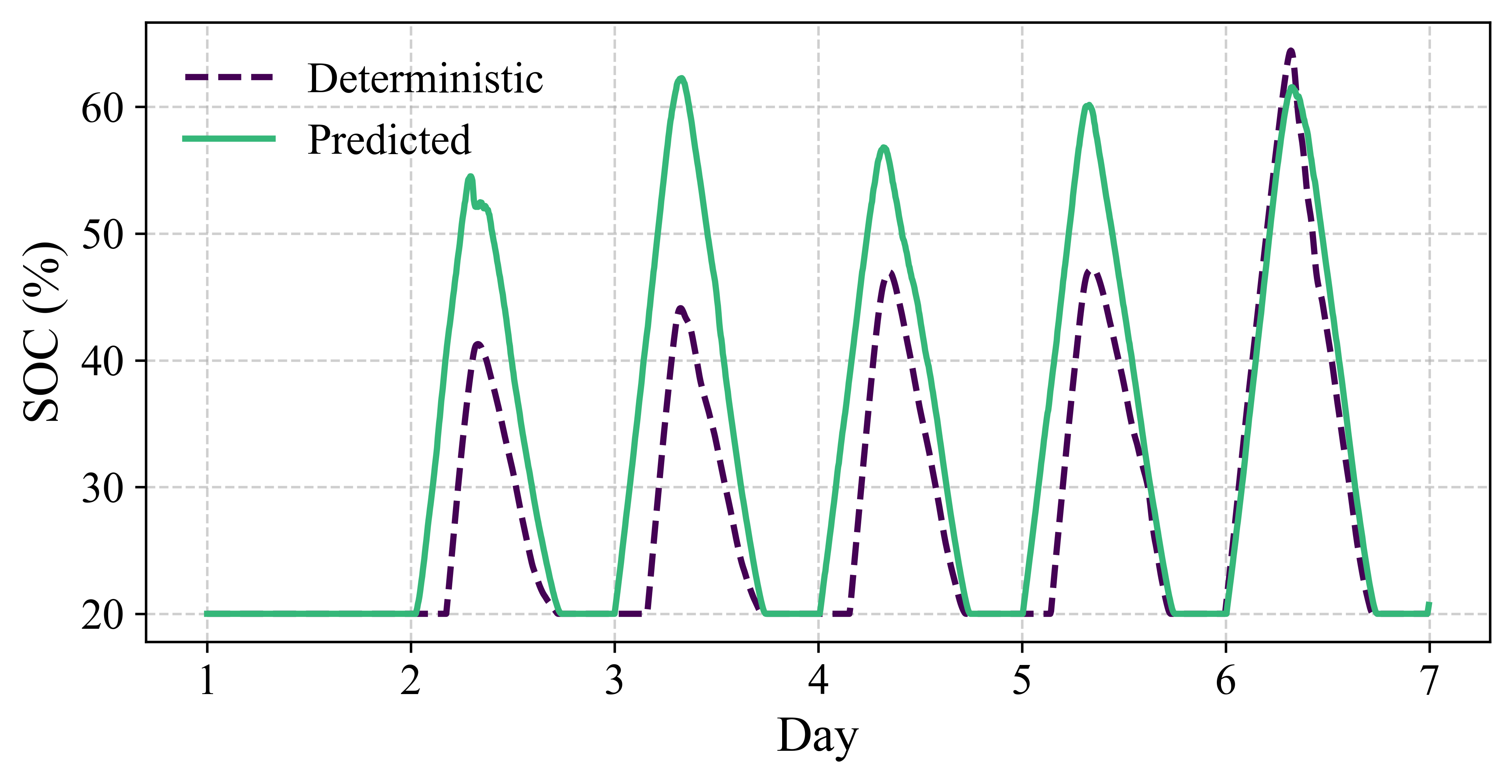}
    \caption{Comparison of the kernel-regression predicted and deterministic minimum SoC reserve trajectories for a 1 MW battery.} 
    \label{fig:SOCs}
\end{figure}

Fig.~\ref{fig:SOCs} compares the predicted SoC reserve trajectory to the deterministic SoC trajectory for a representative week. The predicted SoC generally tracks the timing and magnitude of the actual SoC peaks, though it tends to slightly overestimate the reserve. This behavior aligns with the design of the confidence level parameter ($\alpha$), which accounts for uncertainty by allowing for more conservative SoC predictions.


Fig. \ref{fig:peak_demand} shows the peak demand across the three controller types in comparison to the peak demand with no battery storage. Overall, the kernel regression controller effectively shaves the peak demand, outperforming the demand forecasting model in most months.

\begin{figure}[ht]
    \centering
\includegraphics[width=0.95\columnwidth]{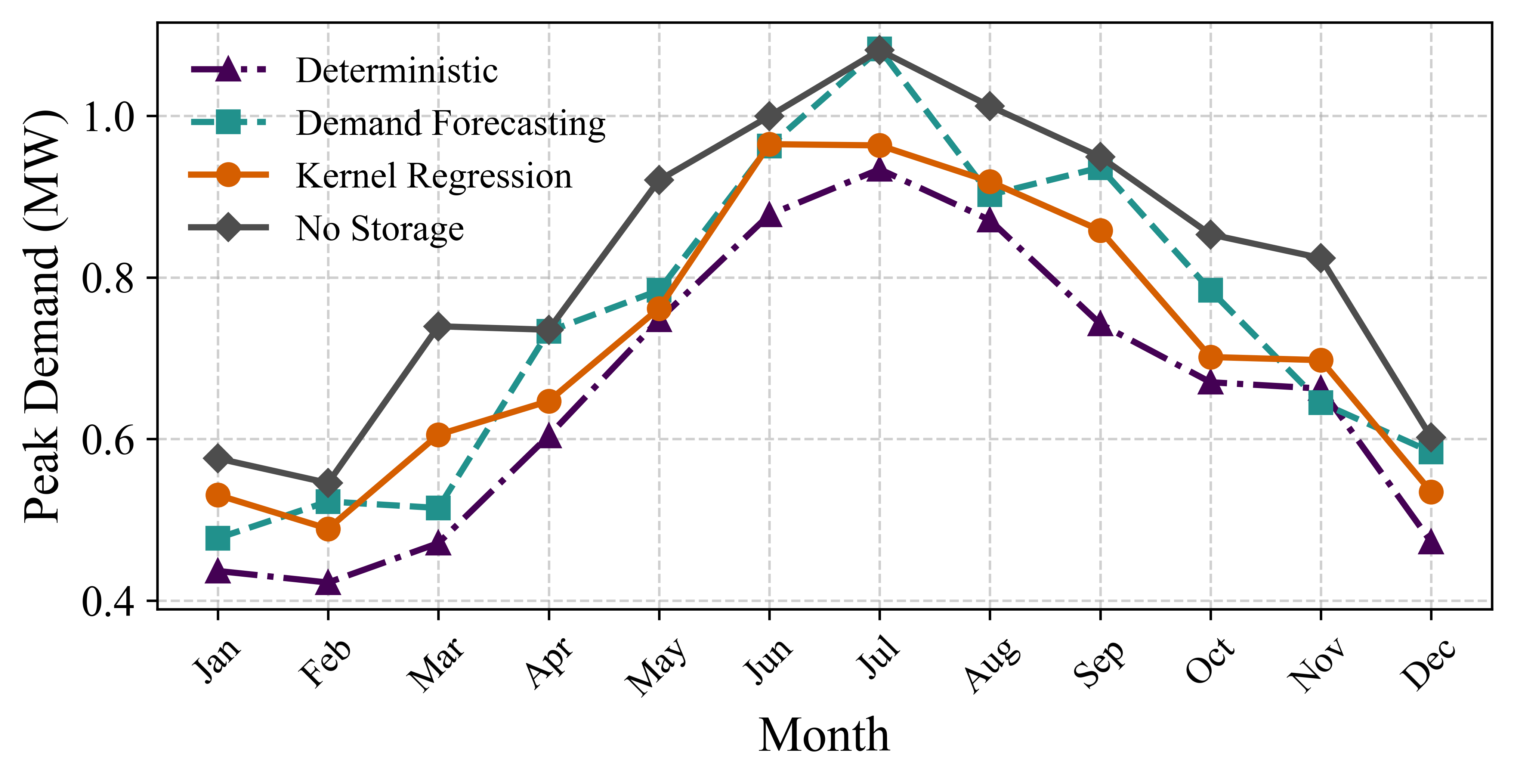}
    \caption{Peak demand (MW) by month for perfect foresight, kernel regression-based, and demand forecasting control algorithms for a 0.6 MW battery.}
    \label{fig:peak_demand}
\end{figure}


To evaluate the economic impact of the proposed control strategy, we analyze annual cost savings relative to a scenario with no storage, where the total electricity cost is \$512.5k, as a function of battery power capacity (MW). This metric captures the economic benefit of combined peak shaving and energy arbitrage given real-time electricity prices and demand patterns. Fig.~\ref{fig:savings} compares the results for the kernel regression–based controller with the deterministic and demand-forecasting baseline cases.

\begin{figure}[ht]
    \centering
\includegraphics[width=0.95\columnwidth]{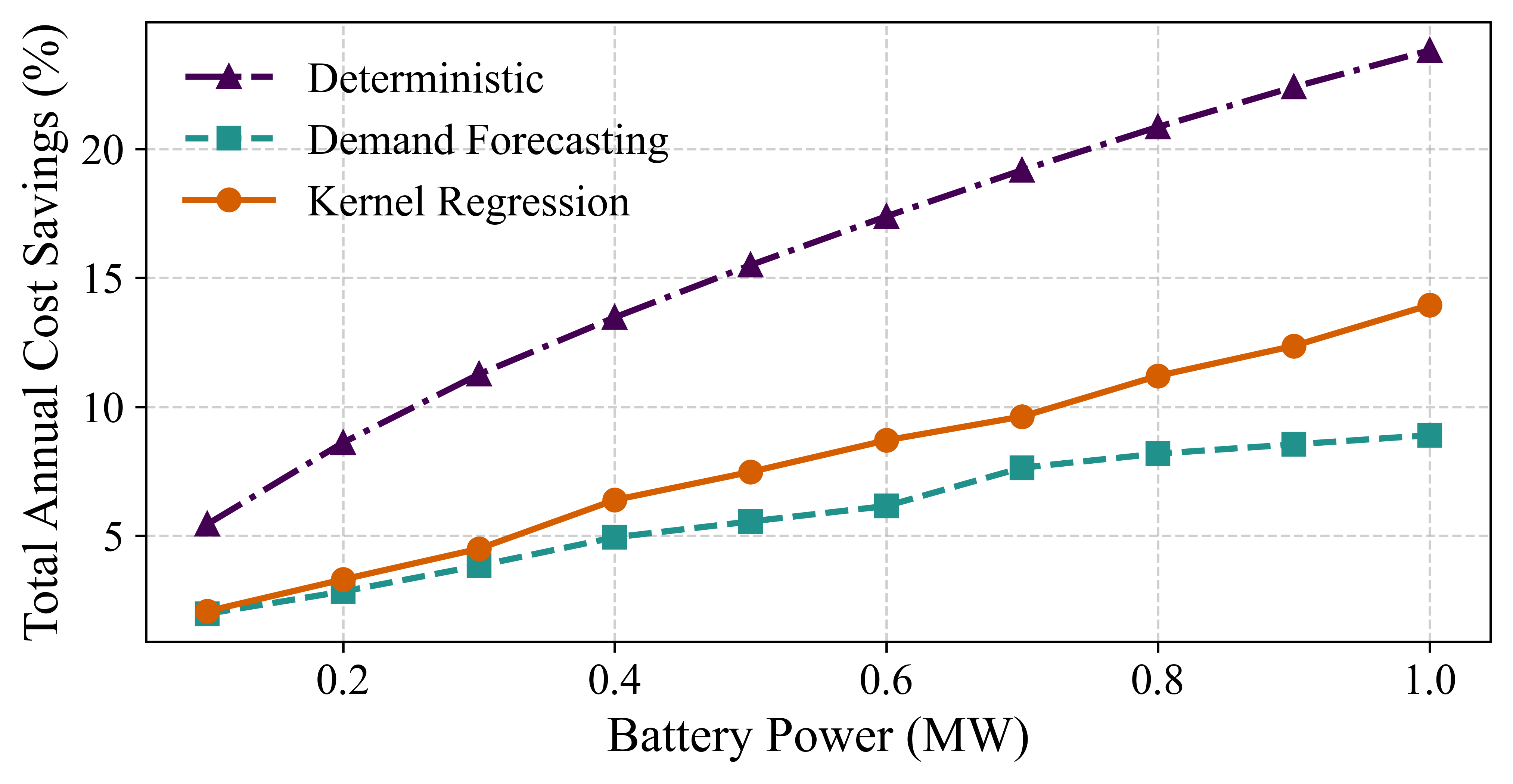}
    \caption{Total annual cost savings (\%) vs. battery size for perfect foresight, kernel regression-based, and demand forecasting control algorithms.}
    \label{fig:savings}
\end{figure}


Larger batteries achieve higher savings, although the increasing trend saturates. Across the battery sizes evaluated in this study, the kernel regression model captures between 38.1\% and 58.5\% of the deterministic cost savings, outperforming the demand forecasting controller for each battery size, which achieves only 33.0\% to 39.8\% of the deterministic cost savings. It is worth noting that the deterministic perfect foresight assumes one month of perfect demand and price forecast (based on a monthly utility bill settlement). Consequently, the performance of the perfect forecast is significantly better. 




Battery cycling is an important metric for understanding battery utilization and wear. 
The deterministic controller operates near its cycle limit of one cycle per day on average, ranging from 346 to 366 cycles annually across the range of battery power capacities (0.1 MW to 1 MW). This behavior indicates that with perfect foresight, maximizing cost savings and fully exploiting peak shaving and arbitrage opportunities requires aggressive daily cycling.
In contrast, the kernel regression controller exhibits more variable cycling, ranging from approximately 76 to 126 cycles per year across battery sizes. This variation arises because the controller relies on predicted SoC trajectories rather than perfect knowledge of future demand and prices. Underestimation or overestimation of the SoC can cause missed opportunities to charge or discharge at optimal times. The demand forecasting model cycles even less, ranging from 57 to 95 annual cycles.


\section{Conclusion}\label{conclusion}

This work presents an end-to-end framework for behind-the-meter peak shaving and stacked energy services in commercial buildings. The method employs a two-stage decomposition, where a nonparametric kernel regression model generates a state-of-charge reserve trajectory for real-time peak shaving control without requiring explicit demand forecasts. The remaining battery capacity is then allocated to energy arbitrage via a non-anticipatory control strategy. A real-world case study demonstrates that the proposed approach outperforms a standard demand forecasting baseline, capturing 38.1\%–58.5\% of the cost savings achieved by a perfect-foresight benchmark. The results highlight the method’s practicality and economic effectiveness for real-time battery management. For future work, we aim to enhance the kernel regression model and integrate additional services such as frequency regulation.

\bibliographystyle{IEEEtran}
\bibliography{IEEEabrv,PS}

\end{document}